\documentclass[11pt]{article}

\usepackage{latexsym}
\usepackage{amssymb}
\usepackage{amsthm}
\usepackage{amsmath}

\newtheorem{theorem}{Theorem}[section]
\newtheorem{lemma}{Lemma}[section]

\newtheorem{corollary}{Corollary}[section]

\numberwithin{equation}{section}

\setlength{\evensidemargin}{1in}
\addtolength{\evensidemargin}{-1in}
\setlength{\oddsidemargin}{1.5in}
\addtolength{\oddsidemargin}{-1.5in}
\setlength{\topmargin}{1in}
\addtolength{\topmargin}{-1.5in}

\setlength{\textwidth}{16cm}
\setlength{\textheight}{23cm}

\newcommand{\FF}{\mathbb{F}}

\newcommand{\be}{\mathbf{e}}

\newcommand{\ulk}{\underline{k}}
\newcommand{\ule}{\underline{e}}
\newcommand{\tH}{\tilde{H}}

\makeatletter
\renewcommand{\@makefnmark}{\mbox{\textsuperscript{}}}
\makeatother

\allowdisplaybreaks[1]

\begin{document}

\bibliographystyle{amsplain}

\title{Multivariate Rogers-Szeg\"o polynomials and flags in finite vector spaces}
\author {C. Ryan Vinroot} 
\date{}

\maketitle

\begin{abstract} We give a recursion for the multivariate Rogers-Szeg\"o polynomials, along with another recursive functional equation, and apply them to compute special values.  We also consider the sum of all $q$-multinomial coefficients of some fixed degree and length, and give a recursion for this sum which follows from the recursion of the multivariate Rogers-Szeg\"o polynomials, and generalizes the recursion for the Galois numbers.  The sum of all $q$-multinomial coefficients of degree $n$ and length $m$ is the number of flags of length $m-1$ of subspaces of an $n$-dimensional vector space over a field with $q$ elements.  We give a combinatorial proof of the recursion for this sum of $q$-multinomial coefficients in terms of finite vector spaces.
\\
\\
2010 {\it Mathematics Subject Classification: } 05A19, 05A15, 05A30
\\
\\
{\it Key words and phrases: } Rogers-Szeg\"o polynomials, finite vector spaces, Galois numbers, flags of subspaces
\end{abstract}

\section{Introduction}

For a parameter $q \neq 1$, and a positive integer $n$, let $(q)_n = (1 - q)(1 - q^2) \cdots (1-q^{n})$, and $(q)_0 = 1$.  For non-negative integers $n$ and $k$, with $n \geq k$, the {\em $q$-binomial coefficient} or {\em Gaussian polynomial}, denoted $\binom{n}{k}_q$, is defined as $\binom{n}{k}_q = \frac{(q)_n}{(q)_k (q)_{n-k}}$.

The {\em Rogers-Szeg\"o polynomial} in a single variable, denoted $H_n(t)$, is defined as
$$ H_n(t) = \sum_{k = 0}^n \binom{n}{k}_q t^k.$$
The Rogers-Szeg\"o polynomials first appeared in papers of Rogers \cite{Rog1, Rog2} which led up to the famous Rogers-Ramanujan identities, and later were independently studied by Szeg\"o \cite{Sz26}.  They are important in combinatorial number theory (\cite[Ex. 3.3--3.9]{An76} and \cite[Sec. 20]{Fi88}), symmetric function theory \cite{Wa06}, and are key examples of orthogonal polynomials \cite{AsWi85}.  They also have applications in mathematical physics \cite{Ka06, Ma89}.

The Rogers-Szeg\"o polynomials satisfy the recursion (see \cite[p. 49]{An76}) 
\begin{equation} \label{RogSze}
H_{n+1}(t) = (1+t)H_n(t) + t(q^n - 1)H_{n-1}(t).
\end{equation}
Letting $t=1$, we have $H_n(1) = \sum_{k = 0}^n \binom{n}{k}_q$, which, when $q$ is the power of a prime, is the total number of subspaces of an $n$-dimensional vector space over a field with $q$ elements.  The numbers $G_n = H_n(1)$ are the \emph{Galois numbers}, and from (\ref{RogSze}), satisfy the recursion $G_{n+1} = 2G_n + (q^n - 1)G_{n-1}$.  The Galois numbers were studied from the point of view of finite vector spaces by Goldman and Rota \cite{GoRo69}, and have been studied extensively, for example, in \cite{NiSoWi84, HiHo10}.

For non-negative integers $k_1, k_2, \ldots, k_m$ such that $k_1 + \cdots + k_m = n$, we define the $q$-multinomial coefficient of length $m$ as 
$$ \binom{n}{k_1, k_2, \ldots, k_m}_q = \frac{(q)_n}{(q)_{k_1} (q)_{k_2} \cdots (q)_{k_m}},$$
so that $\binom{n}{k}_q = \binom{n}{k, n-k}_q$.  Define the homogeneous Rogers-Szeg\"o polynomial in $m$ variables for $m \geq 2$, denoted $\tH_{n}(t_1, t_2, \ldots, t_m)$, by
$$ \tH_n(t_1, t_2, \ldots, t_m) = \sum_{k_1 + \cdots + k_m = n} \binom{n}{k_1, \ldots, k_m}_q t_1^{k_1} \cdots t_m^{k_m},$$
and define the Rogers-Szeg\"o polynomial in $m-1$ variables, denoted $H_n(t_1, \ldots, t_{m-1})$, by 
$$H_n(t_1, \ldots t_{m-1}) = \tH(t_1, \ldots, t_{m-1}, 1).$$  
The homogeneous multivariate Rogers-Szeg\"o polynomials were first defined by Rogers \cite{Rog1} in terms of their generating function, and several of their properties are given by Fine \cite[Section 21]{Fi88}.  The definition of the multivariate Rogers-Szeg\"o polynomial $H_n$ is given by Andrews in \cite[Chap. 3, Ex. 17]{An76}, along with a generating function, although there is little other study of these polynomials elsewhere in the literature (however, there is a non-symmetric version of a bivariate Rogers-Szeg\"o polynomial \cite{ChSaSu07}).  In Section \ref{recursion}, we give a recursion for the multivariate Rogers-Szeg\"o polynomials which generalizes (\ref{RogSze}).  The result, given in Theorem \ref{main} below, follows quickly from the generating function for the multivariate Rogers-Szeg\"o polynomials, although seems not to have been noted before.  We also give a few other properties of the multivariate Rogers-Szeg\"o polynomials in Section \ref{recursion} which complement those given in \cite[Section 21]{Fi88}.

Finally, in Section \ref{flags}, we concentrate on the value $H_n(1, 1, \ldots, 1)$, of the Rogers-Szeg\"o polynomial $H_n(t_1, \ldots, t_{m-1})$ when we let $t_1 = \cdots = t_{m-1} = 1$.  This is the sum of all $q$-multinomial coefficients of length $m$, which we denote by $G_n^{(m)}$, so
$$ G_n^{(m)} = \sum_{k_1 + \cdots +k_m=n} \binom{n}{k_1, \ldots, k_m}_q.$$
These generalize the Galois numbers $G_n = G_n^{(2)}$, and in particular, $G_n^{(m)}$ is the total number of flags of subspaces of length $m$ of an $n$-dimensional vector space over a field with $q$ elements when $q$ is the power of a prime.  The main task of Section \ref{flags} is to study the numbers $G_n^{(m)}$ in the context of finite vector spaces, independent of the Rogers-Szeg\"o polynomials, in the spirit of the study of Goldman and Rota.  A recursion for the numbers $G_n^{(m)}$ is immediately obtained in Corollary \ref{FlagRecCor} from the recursion of the multivariate Rogers-Szeg\"o polynomials, for which we provide a combinatorial proof in terms of finite vector spaces.  We prove the recursion for the generalized Galois numbers by obtaining a recursive formula for $q$-multinomial coefficients, which itself implies the recursion for the multivariate Rogers-Szeg\"o polynomials.

\section{Rogers-Szeg\"o polynomials} \label{recursion}

For any $a, r \neq 1$, and $n \geq 1$, we define $(a ; r)_n$ by
$$ (a ; r)_n = (1 - a)(1 - ar) \cdots (1 - ar^{n-1}), $$
and $(a ; r)_0 = 1$.  Define $(a ; r)_{\infty}$ by the formal infinite product
$$ (a ; r)_{\infty} = \prod_{i = 0}^{\infty} (1 - ar^i).$$
For our fixed parameter $q \neq 1$, and any $a \neq 1$, define $(a)_n = (a ; q)_n$ and $(a)_{\infty} = (a ; q)_{\infty}$, so $(q)_n = (1 - q)(1-q^2) \cdots (1 - q^n)$ for $n \geq 1$ as in the introduction.

Then we have \cite[Cor. 2.2]{An76} the following formal identity, originally due to Euler:
\begin{equation} \label{Euler}
\sum_{n = 0}^{\infty} \frac{x^n}{(r ; r)_n} = (x ; r)_{\infty}^{-1}.
\end{equation}
We may use (\ref{Euler}) to give a generating function for the multivariate Rogers-Szeg\"o polynomials.  The following is stated in \cite[Ex. 3.17]{An76}, but we include the proof here for the sake of self-containment.

\begin{lemma} \label{genfn} The multivariate Rogers-Szeg\"o polynomials have the following generating function:
$$ \sum_{n= 0}^{\infty} \frac{H_n(t_1, \ldots, t_{m-1})}{(q)_n} x^n =  (t_1x)_{\infty}^{-1} \cdots (t_{m-1}x)_{\infty}^{-1}(x)_{\infty}^{-1}.$$
\end{lemma}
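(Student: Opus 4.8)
The plan is to unwind the definition of $H_n$ in terms of $q$-multinomial coefficients and to observe that, after dividing by $(q)_n$ and summing against $x^n$, the resulting formal power series factors into a product of $m$ one-variable sums, each of which is evaluated directly by Euler's identity (\ref{Euler}).

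First I would substitute $H_n(t_1,\ldots,t_{m-1}) = \tH_n(t_1,\ldots,t_{m-1},1)$ and expand the $q$-multinomial coefficient. Since $\binom{n}{k_1,\ldots,k_m}_q = (q)_n/\bigl((q)_{k_1}\cdots(q)_{k_m}\bigr)$, dividing by $(q)_n$ cancels the numerator and leaves
$$\frac{H_n(t_1,\ldots,t_{m-1})}{(q)_n} = \sum_{k_1 + \cdots + k_m = n} \frac{t_1^{k_1}\cdots t_{m-1}^{k_{m-1}}}{(q)_{k_1}\cdots(q)_{k_m}}.$$
Multiplying by $x^n$ and using $n = k_1+\cdots+k_m$ to write $x^n = x^{k_1}\cdots x^{k_m}$ distributes one power of $x$ to each index, so the summand becomes $\dfrac{(t_1 x)^{k_1}\cdots(t_{m-1}x)^{k_{m-1}}\, x^{k_m}}{(q)_{k_1}\cdots(q)_{k_m}}$.

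Next I would sum over all $n \geq 0$. The outer sum over $n$ together with the inner sum over compositions $k_1+\cdots+k_m=n$ is, as a formal power series, simply an unrestricted sum over all tuples $(k_1,\ldots,k_m)$ of nonnegative integers. Because the summand is a product of factors each depending on a single index $k_i$, this sum factors as a product of $m$ independent single-variable sums, each of the form appearing in (\ref{Euler}):
$$\left(\sum_{k_1\geq 0}\frac{(t_1 x)^{k_1}}{(q)_{k_1}}\right)\cdots\left(\sum_{k_{m-1}\geq 0}\frac{(t_{m-1}x)^{k_{m-1}}}{(q)_{k_{m-1}}}\right)\left(\sum_{k_m\geq 0}\frac{x^{k_m}}{(q)_{k_m}}\right).$$
Finally, applying (\ref{Euler}) with $r=q$ to each factor, and noting $(q;q)_n = (q)_n$, evaluates the $i$-th factor as $(t_i x)_{\infty}^{-1}$ for $1 \le i \le m-1$ and the last as $(x)_{\infty}^{-1}$, which is exactly the claimed product.

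The computation is routine; the only point needing a word of care is the factorization step, that is, the rearrangement of the grouped double sum $\sum_n \sum_{k_1+\cdots+k_m=n}$ into a product of $m$ single sums. This is legitimate because we work with formal power series in $x$ (with coefficients polynomial in the $t_i$): each fixed total degree $n$ in $x$ receives contributions from only finitely many tuples, so the regrouping of this multiply-indexed family introduces no convergence issue. Once the series is factored, the conclusion is immediate from (\ref{Euler}), so there is no substantive obstacle beyond this bookkeeping.
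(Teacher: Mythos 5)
Your proof is correct and is essentially the paper's own argument run in reverse: the paper starts from the product $(t_1x)_{\infty}^{-1}\cdots(x)_{\infty}^{-1}$, expands each factor via Euler's identity (\ref{Euler}), and collects terms of total degree $n$ to recognize $H_n(t_1,\ldots,t_{m-1})/(q)_n$, which is the same chain of equalities you traverse from the sum side to the product side. The key ingredients --- Euler's identity applied factor-by-factor and the formal power series rearrangement of the multi-indexed sum --- are identical, so there is nothing substantively different to compare.
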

\begin{proof} By (\ref{Euler}), we have
\begin{align*}
(t_1x)_{\infty}^{-1} \cdots (t_{m-1}x)_{\infty}^{-1}(x)_{\infty}^{-1} & = \left( \sum_{k_1 = 0}^{\infty} \frac{t_1^{k_1}x^{k_1}}{(q)_{k_1}} \right) \cdots \left( \sum_{k_{m-1} = 0}^{\infty} \frac{t_{m-1}^{k_{m-1}}x^{k_{m-1}}}{(q)_{k_{m-1}}} \right) \left( \sum_{k_m = 0}^{\infty} \frac{x^{k_m}}{(q)_{k_m}} \right) \\
 & = \sum_{n=0}^{\infty} \left( \sum_{k_1 + \cdots + k_m = n} \frac{t_1^{k_1} \cdots t_{m-1}^{k_{m-1}}}{(q)_{k_1} \cdots (q)_{k_m}} \right) x^n \\
 & = \sum_{n=0}^{\infty} \left( \sum_{k_1 + \cdots + k_m = n} \binom{n}{k_1, \ldots, k_m}_q t_1^{k_1} \cdots t_{m-1}^{k_{m-1}} \right) \frac{x^n}{(q)_n} \\
 & = \sum_{n= 0}^{\infty} \frac{H_n(t_1, \ldots, t_{m-1})}{(q)_n} x^n,
\end{align*}
as claimed.
\end{proof}

For any finite set of variables $X$, we let $\be_i(X)$ denote the $i$th elementary symmetric polynomial in the variables $X$.  We can now give the recursion of the multivariate Rogers-Szeg\"o polynomials which generalizes (\ref{RogSze}).

\begin{theorem} \label{main}
The Rogers-Szeg\"o polynomials in $m-1$ variables satisfy the following recursion:
$$H_{n+1}(t_1, \ldots, t_{m-1}) = \sum_{i = 0}^{m-1} \be_{i+1}(t_1, \ldots, t_{m-1}, 1) (-1)^{i} \frac{(q)_n}{(q)_{n-i}} H_{n-i}(t_1, \ldots, t_{m-1}).$$
\end{theorem}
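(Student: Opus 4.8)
The plan is to read the recursion directly off the generating function of Lemma~\ref{genfn} by first establishing a $q$-difference equation that it satisfies. Throughout I would write $t_m = 1$, so that Lemma~\ref{genfn} reads
$$ F(x) := \sum_{n=0}^{\infty} \frac{H_n(t_1, \ldots, t_{m-1})}{(q)_n}\, x^n = \prod_{j=1}^{m} (t_j x)_{\infty}^{-1}, $$
the factor $(x)_\infty^{-1}$ having been absorbed into the product as the $j = m$ term since $t_m = 1$.

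First I would produce the functional equation relating $F(qx)$ to $F(x)$. The defining product for $(a)_\infty$ gives the shift identity $(a)_\infty = (1-a)(aq)_\infty$, equivalently $(aq)_\infty^{-1} = (1 - a)\,(a)_\infty^{-1}$. Applying this with $a = t_j x$ to every factor of $F(qx) = \prod_{j=1}^m (t_j q x)_\infty^{-1}$ yields
$$ F(qx) = \Bigl( \prod_{j=1}^{m} (1 - t_j x) \Bigr) F(x) = \Bigl( \sum_{i=0}^{m} (-1)^i \be_i(t_1, \ldots, t_{m-1}, 1)\, x^i \Bigr) F(x), $$
where the product has been expanded into the elementary symmetric polynomials of $t_1, \ldots, t_{m-1}, 1$.

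Next I would compare the coefficient of $x^{n+1}$ on the two sides. Since $F(qx) = \sum_{k \geq 0} \frac{q^k}{(q)_k} H_k\, x^k$ (abbreviating $H_k = H_k(t_1,\ldots,t_{m-1})$), the left side contributes $q^{n+1} H_{n+1}/(q)_{n+1}$, while the Cauchy product on the right contributes $\sum_{i=0}^{m} (-1)^i \be_i(t_1, \ldots, t_{m-1}, 1)\, \frac{H_{n+1-i}}{(q)_{n+1-i}}$. Moving the $i = 0$ summand (for which $\be_0 = 1$) to the left and using $(q)_{n+1} = (1 - q^{n+1})(q)_n$ to simplify $\frac{q^{n+1}-1}{(q)_{n+1}} = -\frac{1}{(q)_n}$, the left side collapses to $-H_{n+1}/(q)_n$. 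Multiplying through by $-(q)_n$ turns each sign $(-1)^i$ into $(-1)^{i+1}$, and the substitution $i = j+1$ (with $j$ running from $0$ to $m-1$) gives $(-1)^{i+1} = (-1)^{j}$, $\be_i = \be_{j+1}$, and $(q)_{n+1-i} = (q)_{n-j}$, reproducing the stated recursion verbatim.

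The computation is routine, so I do not expect a genuine obstacle; the only points needing care are that the sign and the factor $(q)_n/(q)_{n-i}$ emerge correctly from the reindexing, together with a boundary convention for small $n$. When $i > n+1$ the monomial $x^{n+1-i}$ has negative exponent and simply does not occur in $F(x)$, so those summands are absent; equivalently one reads $\frac{(q)_n}{(q)_{n-i}}$ as the product $\prod_{s=n-i+1}^{n}(1 - q^s)$, which contains the factor $1 - q^0 = 0$ exactly when $n - i < 0$, so the corresponding terms vanish automatically (mirroring the way $q^n - 1$ annihilates the $H_{-1}$ term in the single-variable recursion~(\ref{RogSze})). With this understood the identity holds for all $n \geq 0$.
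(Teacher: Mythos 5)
Your proposal is correct and takes essentially the same route as the paper: both derive the $q$-difference equation $F(qx) = \prod_{j=1}^{m}(1 - t_j x)\, F(x)$ (with $t_m = 1$) from Lemma \ref{genfn}, expand the product into elementary symmetric polynomials of $t_1, \ldots, t_{m-1}, 1$, and extract the coefficient of $x^{n+1}$ using $(q)_{n+1} = (1-q^{n+1})(q)_n$. The only differences are cosmetic: the paper isolates the $i=0$ term at the level of power series before comparing coefficients, whereas you compare coefficients first and then reindex, and you additionally spell out the boundary convention for $n-i<0$, which the paper leaves implicit.
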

\begin{proof} Let 
$$ F(x, t_1, \ldots, t_{m-1}) = \sum_{n=0}^{\infty} \frac{H_n(t_1, \ldots, t_{m-1})}{(q)_n} x^n = (t_1x)_{\infty}^{-1} \cdots (t_{m-1}x)_{\infty}^{-1}(x)_{\infty}^{-1},$$  
by Lemma \ref{genfn}.  Since $(1 - t_ix)(t_ix)_{\infty}^{-1} = (t_ixq)_{\infty}^{-1}$ and $(1-x)(x)_{\infty}^{-1} = (xq)_{\infty}^{-1}$, we have
\begin{align} \label{functional1}
(1 - t_1 x)\cdots (1 - t_{m-1}x) (1-x) F(x, t_1, \ldots, t_{m-1}) & = (t_1 xq)_{\infty}^{-1} \cdots (t_{m-1} xq)_{\infty}^{-1} (xq)_{\infty}^{-1} \notag \\ 
& = F(xq, t_1, \ldots, t_{m-1}).
\end{align}
By the definition of the elementary symmetric polynomials, we have
$$ (1 - t_1 x) \cdots (1 - t_{m-1} x) (1 - x) = \sum_{i = 0}^m (-1)^i \be_i(t_1, \ldots, t_{m-1}, 1) x^i.$$
From this and (\ref{functional1}), we have
$$ \left( \sum_{i = 0}^m (-1)^i \be_i(t_1, \ldots, t_{m-1}, 1) x^i \right)  \sum_{n=0}^{\infty} \frac{H_n(t_1, \ldots, t_{m-1})}{(q)_n} x^n = \sum_{n=0}^{\infty} \frac{q^n H_n(t_1, \ldots, t_{m-1})}{(q)_n} x^n,$$
which may be re-written as
$$ \sum_{n=0}^{\infty} \frac{(1 - q^n) H_n(t_1, \ldots, t_{m-1})}{(q)_n} x^n = \left( \sum_{i = 0}^{m-1} (-1)^i\be_{i+1} (t_1, \ldots, t_{m-1}, 1) x^{i+1} \right) \sum_{n=0}^{\infty} \frac{H_n(t_1, \ldots, t_{m-1})}{(q)_n} x^n.$$
Comparing the coefficients of $x^{n+1}$ in both sides of the above expression, we obtain
$$\frac{H_{n+1}(t_1, \ldots, t_{m-1})}{(q)_n} = \sum_{i = 0}^{m-1} \be_{i+1}(t_1, \ldots, t_{m-1}, 1) \frac{(-1)^i}{(q)_{n-i}} H_{n-i}(t_1, \ldots, t_{m-1}),$$
which yields the desired result.
\end{proof}

Note that we have $(-1)^i (q)_n/(q)_{n-i} = (q^n - 1)(q^{n-1} - 1) \cdots (q^{n-i + 1} - 1)$.  For example, applying Theorem \ref{main} to the Rogers-Szeg\"o polynomials in two variables, we obtain
\begin{align*}
H_{n+1}(t_1, t_2) = (1 + t_1 + t_2) H_n(t_1, t_2)  + & (t_1 t_2 + t_1 + t_2)(q^n - 1) H_{n-1}(t_1, t_2) \\ & + t_1 t_2 (q^n - 1)(q^{n-1} - 1) H_{n-2}(t_1, t_2).
\end{align*}

When the Rogers-Szeg\"o polynomial $H_n(t)$ in a single variable is evaluated at $t=-1$, we get the following identity for the alternating sum of $q$-binomial coefficients originally due to Gauss:
\begin{equation} \label{Special1single}
H_n(-1) = \sum_{k=0}^n \binom{n}{k}_q (-1)^k = \left\{ \begin{array}{ll} \frac{(q)_n}{(q^2 ; q^2)_{n/2}} = \prod_{j<n, j \text{ odd }} (1 - q^j) & \text{ if $n$ is even,} \\ 0 & \text{ otherwise.} \end{array} \right. 
\end{equation} 
As pointed out in \cite[Sec. 21]{Fi88}, this identity may be generalized by evaluating the multivariate Rogers-Szeg\"o polynomials at roots of unity.  That is, if $\omega = e^{2\pi i/m}$ is a primitive $m$-th
  root of unity, where $m \geq 2$, and let $n \geq 0$, then 
\begin{equation} \label{Special1}
H_n(\omega, \omega^2, \ldots, \omega^{m-1}) = \left\{ \begin{array}{ll} \frac{(q)_n}{(q^m ; q^m)_{n/m}} = \prod_{j < n, m \nmid j} (1 - q^j) & \text{ if $m|n$,} \\ 0 & \text{ otherwise.} \end{array} \right. 
\end{equation}

This is calculated in \cite{Fi88} by applying the generating function in Lemma \ref{genfn}.  We note that we may also compute it quickly from Theorem \ref{main} as follows.  We have $\be_i(\omega, \ldots, \omega^{m-1}, 1) = 0$ for $1 \leq i \leq
  m-1$, while $\be_m(\omega, \ldots, \omega^{m-1}, 1) = (-1)^{m+1}$, since these roots of unity are the roots of $x^m - 1$.   We have
  $H_0(t_1, \ldots, t_{m-1}) = 1$, and for $0 < n \leq m-1$, we have $H_n(t_1, \ldots, t_{m-1})$ is a symmetric polynomial in $t_1, \ldots, t_{m-1}, 1$, of degree $n < m$, with zero constant term.  We may thus write $H_n(t_1, \ldots, t_{m-1})$ as a polynomial in $\be_i(t_1, \ldots, t_{m-1}, 1)$, with $0< i < n$.  Now, $H_n(\omega, \ldots, \omega^{m-1})$ can be written as a polynomial in $\be_i(\omega, \ldots, \omega^{m-1}, 1)$, for $1 \leq i \leq m-1$, which are all $0$.  It follows that $H_n(\omega, \ldots, \omega^{m-1}) = 0$ for these $n$.  By Theorem \ref{main} and the values of the elementary symmetric polynomials, if $n \geq m$ then
\begin{align*}
H_{n}(\omega, \ldots, \omega^{m-1}) & = \sum_{i = 0}^{m-1} \be_{i+1}(\omega, \ldots, \omega^{m-1}, 1) (-1)^i \frac{(q)_{n-1}}{(q)_{n-1-i}} H_{n-1-i}(\omega, \ldots, \omega^{m-1}) \\
 & = (1- q^{n-1}) (1 - q^{n-2}) \cdots (1-q^{n-m+1}) H_{n-m}(\omega, \ldots, \omega^{m-1}). \notag
\end{align*}
The values (\ref{Special1}) now follow by induction.

Another value of the Rogers-Szeg\"o polynomial of a single variable is
$$H_n(q^{1/2}) = \frac{(q)_n}{(q^{1/2} ; q^{1/2})_n} = (q^{1/2} ; -q^{1/2})_n = \prod_{j = 1}^n (1 + q^{j/2}).$$
This is generalized in \cite{Fi88} with the value
\begin{equation} \label{Special2}
H_n(q^{1/m}, q^{2/m}, \ldots, q^{(m-1)/m}) = \frac{(q)_n}{(q^{1/m} ; q^{1/m})_n} = \prod_{j = 1}^n (1 + q^{j/m} + \cdots + q^{j(m-1)/m}).
\end{equation}
Fine also gives a generalization of both (\ref{Special1}) and (\ref{Special2}), and applies it to obtain a bi-basic identity \cite[21.4]{Fi88}.

The Rogers-Szeg\"o polynomial $H_n(t)$ also takes the value
\begin{equation} \label{Special3single}
H_n(-q) = \frac{(q)_n}{(q^2 ; q^2)}_{ \lfloor n/2 \rfloor} = \prod_{j \leq n, j \text{ odd}} (1 - q^j),
\end{equation}
which is applied in finding identities involving Hall-Littlewood functions in \cite{Wa06}, for example.  A generalization of (\ref{Special3single}) for the multivariate Rogers-Szeg\"o polynomials is not covered above, and so we obtain one now.  Let $\omega = e^{2\pi i/m}$ be a primitive $m$-th root of unity, where $m \geq 2$, and let $n \geq 0$.  Then
\begin{equation} \label{Special3}
H_n(\omega q, \omega^2 q, \ldots, \omega^{m-1} q) = \frac{(q)_n}{(q^m ; q^m)_{\lfloor n/m \rfloor}} = \prod_{j \leq n, m \nmid j} (1 - q^j),
\end{equation} 
which we calculate as follows.  By Lemma \ref{genfn} and (\ref{Euler}), we have
\begin{align*}
\sum_{n=0}^{\infty} \frac{H_n(\omega q, \ldots, \omega^{m-1} q)}{(q)_n} x^n & = \frac{1}{(x)_{\infty} (x\omega q)_{\infty} \cdots (x\omega^{m-1} q)_{\infty}} \\
& = \frac{1}{1 - x} \prod_{j= 1}^{\infty} \prod_{l = 0}^{m-1} \frac{1}{1 - x \omega^l q^j}  = \frac{1}{1-x} \prod_{j=1}^{\infty} \frac{1} {1 - x^m q^{mj}} \\
& = \frac{1 + x + \cdots + x^{m-1}}{(1 - x^m)(1-x^m q^m)(1 - x^m q^{2m}) \cdots} = \frac{1 + x + \cdots + x^{m-1}}{(x^m ; q^m)_{\infty}} \\
& = (1 + x + \cdots + x^{m-1}) \sum_{k = 0}^{\infty} \frac{1}{(q^m ; q^m)_k} x^{mk}.
\end{align*}
Comparing the coefficients of $x^n$, we obtain (\ref{Special3}).

The value (\ref{Special3single}) of $H_n(-q)$ could also be computed by using (\ref{Special1single}) along with the functional equation \cite[20.64b]{Fi88}
\begin{equation} \label{qfunctionalsingle}
H_{n}(tq) = H_n(t) - t(1-q^n) H_{n-1}(t).
\end{equation} 
The next result, which generalizes (\ref{qfunctionalsingle}) to multivariate Rogers-Szeg\"o polynomials, has a very similar form and proof to Theorem \ref{main}.

\begin{theorem} \label{qfunctional}
Let $m \geq 2$, and let $J \subseteq \{ 1, \ldots, m-1 \}$, where $|J| \neq 0$.  For $1 \leq i \leq m-1$, define $s_i = t_i q$ if $i \in J$, and $s_i = t_i$ otherwise.  Let $\be_i(t_J)$ be the $i$th elementary symmetric polynomial in the set of variables $t_J = \{ t_j \, \mid \, j \in J\}$.  Then for $n \geq |J|$, 
$$H_n(s_1, \ldots, s_{m-1}) = \sum_{i = 0}^{|J|} \be_{i}(t_J) (-1)^i \frac{(q)_n}{(q)_{n-i}} H_{n-i} (t_1, \ldots, t_{m-1}).$$
\end{theorem}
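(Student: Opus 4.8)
The plan is to mirror the proof of Theorem \ref{main}, replacing the shift in the variable $x$ (which there sent $F(x, \cdot)$ to $F(xq, \cdot)$) by a shift applied only to the chosen subset $J$ of the $t$-variables. First I would recall from Lemma \ref{genfn} the generating function
$$ F(x, t_1, \ldots, t_{m-1}) = \sum_{n=0}^{\infty} \frac{H_n(t_1, \ldots, t_{m-1})}{(q)_n} x^n = (t_1 x)_{\infty}^{-1} \cdots (t_{m-1} x)_{\infty}^{-1} (x)_{\infty}^{-1}, $$
and write out the analogous product with the shifted arguments $s_1, \ldots, s_{m-1}$ in place of $t_1, \ldots, t_{m-1}$.

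The key step is the same product identity used in Theorem \ref{main}, namely $(1 - t_i x)(t_i x)_{\infty}^{-1} = (t_i x q)_{\infty}^{-1}$, read in reverse. For each $i \in J$ we have $s_i = t_i q$, so $(s_i x)_{\infty}^{-1} = (t_i q x)_{\infty}^{-1} = (1 - t_i x)(t_i x)_{\infty}^{-1}$, whereas for $i \notin J$ the factor $(s_i x)_{\infty}^{-1} = (t_i x)_{\infty}^{-1}$ is unchanged. Multiplying these factors together and collecting the unchanged product back into $F$ yields the functional equation
$$ F(x, s_1, \ldots, s_{m-1}) = \Bigl( \prod_{i \in J} (1 - t_i x) \Bigr) F(x, t_1, \ldots, t_{m-1}). $$

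I would then expand the finite product via the elementary symmetric polynomials as $\prod_{i \in J}(1 - t_i x) = \sum_{i=0}^{|J|} (-1)^i \be_i(t_J) x^i$, substitute the power series for both copies of $F$, and compare the coefficient of $x^n$ on each side. The right-hand coefficient is $\sum_{i=0}^{|J|} (-1)^i \be_i(t_J) H_{n-i}(t_1, \ldots, t_{m-1})/(q)_{n-i}$, and the hypothesis $n \geq |J|$ is exactly what guarantees that $H_{n-i}$ is defined for every $i$ with $0 \leq i \leq |J|$, so that no term of the sum is truncated. Multiplying through by $(q)_n$ then gives the stated recursion.

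I expect no serious obstacle, since the argument is a direct adaptation of Theorem \ref{main}; the only points requiring care are the bookkeeping of the partition of $\{1, \ldots, m-1\}$ into $J$ and its complement, so that precisely the $J$-indexed Euler factors acquire the $(1 - t_i x)$ terms while the others reassemble into $F(x, t_1, \ldots, t_{m-1})$, and the verification that the bound $n \geq |J|$ is what makes the coefficient comparison produce the full sum over $0 \leq i \leq |J|$.
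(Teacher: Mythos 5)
Your proposal is correct and follows essentially the same route as the paper's own proof: the identical functional equation $F(x, s_1, \ldots, s_{m-1}) = \bigl( \prod_{j \in J} (1 - t_j x) \bigr) F(x, t_1, \ldots, t_{m-1})$ derived from the Euler-product form of the generating function, followed by the same expansion into elementary symmetric polynomials and comparison of the coefficient of $x^n$. Your additional remark on why the hypothesis $n \geq |J|$ ensures the sum is not truncated is a point the paper leaves implicit, but the argument is otherwise the same.
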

\begin{proof} Let $F(x, t_1, \ldots, t_{m-1})$ be the generating function for $H_n(t_1, \ldots, t_{m-1})$ as in the proof of Theorem \ref{main}.  Then we have
\begin{align} \label{functional2}
\left(\prod_{j \in J} (1 - t_jx) \right) F(x, t_1, \ldots, t_{m-1}) & = \left(\prod_{j \in J} (t_jq x)_{\infty}^{-1}\right) \left(\prod_{i \not\in J, 1 \leq i \leq m-1} (t_i x)_{\infty}^{-1} \right) (x)_{\infty}^{-1} \notag \\ 
& = F(x, s_1, \ldots, s_{m-1}).
\end{align}
Since $\prod_{j \in J} (1 - t_j x) = \sum_{i = 0}^{|J|} (-1)^i \be_i(t_J) x^i$, it follows from (\ref{functional2}) that we have
$$ \sum_{n=0}^{\infty} \frac{H_n(s_1, \ldots, s_{m-1})}{(q)_n} x^n = \left( \sum_{i = 0}^{|J|} (-1)^i \be_i(t_J) x^i \right)  \sum_{n=0}^{\infty} \frac{H_n(t_1, \ldots, t_{m-1})}{(q)_n} x^n.$$
Comparing the coefficient of $x^n$ in both sides of the above gives the result.
\end{proof}

Now we may compute the value (\ref{Special3}) by applying Theorem \ref{qfunctional} in the following way.  Note that since $\omega, \omega^2, \ldots, \omega^{m-1}$, are the roots of $x^m + x^{m-1} + \cdots + 1$, then $\be_i(\omega, \ldots, \omega^{m-1}) = (-1)^i$ for $0 \leq i \leq m-1$.  If we are able to compute the values $H_i(\omega q, \ldots, \omega^{m-1} q)$ for $i < m$, then we use Theorem \ref{qfunctional} with $J = \{1, \ldots, m-1\}$ and $t_i = \omega^i$ to obtain, when $n \geq m$,
$$H_n(\omega q, \ldots, \omega^{m-1} q) = \sum_{i = 1}^{m-1} \frac{(q)_n}{(q)_{n-i}} H_{n-i}(\omega, \ldots, \omega^{m-1}).$$
The values (\ref{Special3}) then follow for $n \geq m$ when plugging in the values (\ref{Special1}).  We can compute $H_n(\omega q, \ldots, \omega^{m-1} q)$ for $n < m$ using Theorem \ref{qfunctional} as well.  For $n = 1$, we begin by taking $J = \{1\}$ and $t_i = \omega^i$ to obtain $H_1(\omega q, \omega^2, \ldots, \omega^{m-1}) = \omega (q - 1)$.  Then take $J = \{2\}$, $t_1 = \omega q$, and $t_i = \omega^i$ for $i > 1$.  Applying Theorem \ref{qfunctional} then gives $H_1(\omega q, \omega^2 q, \omega^3, \ldots, \omega^{m-1}) = (\omega + \omega^2)(q-1)$.  Continuing in this way, we get
$$H_1(\omega q, \omega^2 q, \ldots, \omega^{m-1} q) = (\omega + \cdots + \omega^{m-1}) (q-1) = (1-q).$$
The values for $1 < n < m$ may be computed similarly.

\section{Flags in finite vector spaces} \label{flags}

Now let $q$ be the power of a prime, and let $\FF_q$ denote a finite field with $q$ elements.  If $V$ is an $n$-dimensional vector space over $\FF_q$, then the $q$-binomial coefficient $\binom{n}{k}_q$ is the number of $k$-dimensional subspaces of $V$ (see \cite[Thm. 7.1]{KaCh02} or \cite[Prop. 1.3.18]{St97}).  When we evaluate the Rogers-Szeg\"o polynomial at $t=1$, we obtain
$$ H_n(1) = \sum_{k=0}^n \binom{n}{k}_q,$$
which is the total number of subspaces of an $n$-dimensional vector space over $\FF_q$.  We define the Galois numbers as $G_n = H_n(1)$.  As mentioned in the introduction, the recursion for the Rogers-Szeg\"o polynomials (\ref{RogSze}) gives the following recursion for the Galois numbers, which was studied by Goldman and Rota \cite{GoRo69}:
\begin{equation} \label{GalRec}
G_{n+1} = 2G_n + (q^n - 1)G_{n-1}, \quad G_0 = 1, G_1 = 2.
\end{equation}
The recursion (\ref{GalRec}) was proved bijectively by counting subspaces of finite vector spaces by Nijenhuis, Solow, and Wolf \cite{NiSoWi84}.  The proof in \cite{NiSoWi84} is obtained by proving the following result bijectively, from which (\ref{GalRec}) follows.

\begin{lemma} \label{GalLemma}
For integers $n \geq k \geq 1$, we have
$$\binom{n+1}{k}_q = \binom{n}{k}_q + \binom{n}{k-1}_q + (q^n-1)\binom{n-1}{k-1}_q.$$
\end{lemma}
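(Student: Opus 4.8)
The plan is to give a combinatorial proof, interpreting each term as a count of subspaces, since here $q$ is a prime power. Let $V$ be an $(n+1)$-dimensional space over $\FF_q$, fix a hyperplane $W \subseteq V$ of dimension $n$, and fix a vector $v \in V \setminus W$ spanning a complementary line $L = \langle v \rangle$, so that $V = W \oplus L$. I would then partition the $k$-dimensional subspaces $U$ of $V$ into three classes according to their position relative to $W$ and $L$, and match the three classes to the three terms on the right-hand side.

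First, the subspaces $U$ with $U \subseteq W$ are exactly the $k$-dimensional subspaces of the $n$-dimensional space $W$, accounting for the term $\binom{n}{k}_q$. Second, the subspaces $U$ with $L \subseteq U$ correspond bijectively, via $U \mapsto U/L$, to the $(k-1)$-dimensional subspaces of $V/L \cong W$, accounting for $\binom{n}{k-1}_q$; these two identifications are immediate. Every remaining subspace satisfies $U \not\subseteq W$ and $v \notin U$; I will call these the class $(C)$ subspaces, and the entire content of the lemma is the claim that there are exactly $(q^n - 1)\binom{n-1}{k-1}_q$ of them.

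To count class $(C)$, I would observe that for such $U$ the sum $U' = U + L$ has dimension $k+1$ (because $v \notin U$ forces $U \cap L = 0$), so that $Z := U' \cap W$ is a $k$-dimensional subspace of $W$ with $U' = Z \oplus L$, and $U$ is a complement to $L$ in $U'$ different from $Z$; conversely every complement to $L$ in $Z \oplus L$ other than $Z$ lies in class $(C)$, and $Z = (U+L) \cap W$ is recoverable from $U$. The complements to $L$ in $Z \oplus L$ are precisely the graphs $\{ z + \phi(z) v : z \in Z \}$ of linear functionals $\phi \in Z^{*}$, with the complement equal to $Z$ exactly when $\phi = 0$; hence each $Z$ contributes $q^{k} - 1$ class $(C)$ subspaces, for a total of $(q^{k} - 1)\binom{n}{k}_q$. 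The main obstacle is then to reconcile this with the stated form $(q^n - 1)\binom{n-1}{k-1}_q$: I would handle it by the double count of pairs consisting of a $k$-dimensional subspace $Z$ of $W$ together with a marked nonzero vector, which on one hand equals $(q^{k} - 1)\binom{n}{k}_q$ and on the other, by first choosing the nonzero vector $w \in W$ (in $q^n - 1$ ways) and then a $(k-1)$-dimensional subspace of the $(n-1)$-dimensional quotient $W/\langle w \rangle$, equals $(q^n - 1)\binom{n-1}{k-1}_q$. Packaging this cleanly, ideally as a single explicit bijection from class $(C)$ onto pairs $(w, Y)$ with $w \in W \setminus \{0\}$ and $Y$ a $(k-1)$-dimensional subspace of $W/\langle w \rangle$, is the step I expect to require the most care.

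As an independent check, the identity also follows algebraically from the standard $q$-Pascal rule $\binom{n+1}{k}_q = \binom{n}{k}_q + q^{\,n+1-k}\binom{n}{k-1}_q$, after verifying the factorial identity $(q^{\,n+1-k} - 1)\binom{n}{k-1}_q = (q^n - 1)\binom{n-1}{k-1}_q$ by cancelling the $(q)$-products. Since the surrounding section emphasizes finite vector spaces in the spirit of Goldman and Rota, however, I would present the combinatorial argument above as the main proof and relegate the algebraic computation to a remark.
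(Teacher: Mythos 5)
Your proof is correct, and its skeleton coincides with the argument the paper relies on: your three classes are exactly the Type 1 / Type 2 / Type 3 subspaces into which the paper (following Nijenhuis, Solow, and Wilf \cite{NiSoWi84}) partitions the $k$-dimensional subspaces of $V$ relative to a fixed hyperplane and a complementary line. The difference lies in how the third class is counted: the paper never proves this lemma itself — it cites \cite{NiSoWi84}, and in its later summary of that proof it merely asserts that ``it can be shown'' that there are $(q^n-1)\binom{n-1}{k-1}_q$ Type 3 subspaces — whereas you actually supply the count. Your identification of the complements of $L$ in $Z \oplus L$ with graphs of functionals $\phi \in Z^{*}$ is sound (note that every graph automatically avoids $v$, and equals $Z$ exactly when $\phi = 0$), giving $(q^k-1)\binom{n}{k}_q$ subspaces in class $(C)$, and your double count of pairs consisting of a $k$-dimensional $Z \subseteq W$ with a marked nonzero vector correctly converts this to $(q^n-1)\binom{n-1}{k-1}_q$. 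One remark: the step you flag as requiring the most care — packaging everything as a single explicit bijection onto pairs $(w,Y)$ — is not actually needed for the lemma; the bijection onto pairs $(Z,\phi)$ together with the double-counting identity already completes an enumerative proof, and the single bijection would only matter if you insisted on a fully bijective proof in the style of \cite{NiSoWi84}. Your algebraic fallback via the $q$-Pascal rule is also correct, and is the shortest route, though as you say it is less in the spirit of Section \ref{flags}.
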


We now consider the meaning of a $q$-multinomial coefficient in terms of vector spaces over $\FF_q$.  It follows from the definition of a $q$-multinomial coefficient and the fact that $\binom{n}{k}_q = \binom{n}{n-k}_q$ that we have
\begin{align*}
\binom{n}{k_1, k_2, \ldots, k_m}_q & = \binom{n}{k_1}_q \binom{n-k_1}{k_2}_q \cdots \binom{n-k_1-\cdots - k_{m-2}}{k_{m-1}}_q\\
& = \binom{n}{n-k_1}_q \binom{n-k_1}{n-k_1-k_2}_q \cdots \binom{n-k_1 - \cdots -k_{m-2}}{n-k_1 -\cdots -k_{m-2} - k_{m-1}}_q.
\end{align*}
So, if $V$ is an $n$-dimensional vector space over $\FF_q$, the $q$-multinomial coefficient $\binom{n}{k_1, \ldots, k_m}_q$ is equal to the number of ways to choose an $(n-k_1)$-dimensional subspace $W_1$ of $V$, an $(n-k_1 -k_2)$-dimensional subspace $W_2$ of $W_1$, and so on, until finally we choose an $(n-k_1-\cdots -k_{m-1})$-dimensional subspace $W_{m-1}$ of some $(n-k_1 - \cdots -k_{m-2})$-dimensional subspace $W_{m-2}$ (see also \cite[Sec. 1.5]{Mo06}).  That is, 
$$W_{m-1} \subseteq W_{m-2} \subseteq \cdots \subseteq W_2 \subseteq W_1$$
is a {\em flag} of subspaces of $V$ of length $m-1$, where ${\rm dim} \; W_i = n - \sum_{j=1}^i k_j$.

If we evaluate the Rogers-Szeg\"o polynomial in $m-1$ variables at $t_1 = t_2 = \cdots = t_{m-1} = 1$, we obtain
$$ H_n(1, 1, \ldots, 1) = \sum_{k_1 + \cdots + k_m = n} \binom{n}{k_1, \ldots, k_m}_q,$$
which, by the discussion above, counts the total number of flags of subspaces of length $m-1$ in an $n$-dimensional $\FF_q$-vector space.  We denote this quantity by $G_n^{(m)}$, so that the Galois number $G_n = G_n^{(2)}$.  We may apply Theorem \ref{main} to obtain a recursion for the numbers $G_n^{(m)}$, generalizing the recursion in (\ref{GalRec}), by noticing that the number of terms in the elementary symmetric polynomial $\be_{i+1}(t_1, \ldots, t_{m-1}, 1)$ is $\binom{m}{i+1}$.

\begin{corollary} \label{FlagRecCor}
The numbers $G_n^{(m)}$ satisfy the following recursion, for $n \geq m-1$:
$$G_{n+1}^{(m)} = \sum_{i=0}^{m-1} \binom{m}{i+1} (-1)^i \frac{(q)_n}{(q)_{n-i}}G_{n-i}^{(m)}.$$
\end{corollary}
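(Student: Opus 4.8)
The plan is to obtain this recursion as an immediate specialization of Theorem \ref{main}, setting every variable equal to $1$. Recall that by definition $G_n^{(m)} = H_n(1, 1, \ldots, 1)$, where there are $m-1$ ones. So substituting $t_1 = t_2 = \cdots = t_{m-1} = 1$ into the recursion of Theorem \ref{main} turns $H_{n+1}(t_1, \ldots, t_{m-1})$ into $G_{n+1}^{(m)}$ and each term $H_{n-i}(t_1, \ldots, t_{m-1})$ into $G_{n-i}^{(m)}$, while the factors $(-1)^i (q)_n / (q)_{n-i}$ are unaffected.

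The only quantity requiring evaluation is the elementary symmetric polynomial $\be_{i+1}(t_1, \ldots, t_{m-1}, 1)$ at $t_1 = \cdots = t_{m-1} = 1$. This is the $(i+1)$-th elementary symmetric polynomial in $m$ arguments, all equal to $1$. Since $\be_{i+1}$ is the sum, over all $(i+1)$-element subsets of the $m$ variables, of the product of the chosen variables, and each such product evaluates to $1$, its value is just the number of such subsets, namely $\binom{m}{i+1}$. This is precisely the observation flagged in the paragraph preceding the statement.

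Substituting these values into the recursion of Theorem \ref{main} gives
$$G_{n+1}^{(m)} = \sum_{i=0}^{m-1} \binom{m}{i+1} (-1)^i \frac{(q)_n}{(q)_{n-i}} G_{n-i}^{(m)},$$
which is the claim. The hypothesis $n \geq m-1$ serves only to guarantee that $n - i \geq 0$ for every $i$ with $0 \leq i \leq m-1$, so that each $G_{n-i}^{(m)}$ appearing on the right-hand side is a genuine (nonnegative-index) flag count and the specialization of Theorem \ref{main} is valid in that range.

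There is essentially no obstacle here, as the corollary is a direct evaluation of Theorem \ref{main}. The single point that warrants a moment's care is recognizing that the relevant elementary symmetric polynomial has $m$ arguments rather than $m-1$ — because of the appended constant $1$ in $\be_{i+1}(t_1, \ldots, t_{m-1}, 1)$ — so that the count of monomials is $\binom{m}{i+1}$ and not $\binom{m-1}{i+1}$.
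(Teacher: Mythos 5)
Your proposal is correct and is essentially the paper's own derivation: Corollary \ref{FlagRecCor} is obtained by setting $t_1 = \cdots = t_{m-1} = 1$ in Theorem \ref{main} and noting that $\be_{i+1}(t_1, \ldots, t_{m-1}, 1)$, an elementary symmetric polynomial in $m$ arguments, evaluates to $\binom{m}{i+1}$. (The paper goes on to give a second, purely combinatorial proof via Lemma \ref{GenGalLemma}, but that is presented as an additional result, not as the derivation of the corollary itself.)
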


In this section, we prove Corollary \ref{FlagRecCor} combinatorially in terms of finite vector spaces, by proving an analog of Lemma \ref{GalLemma}.

We need some notation.  Let $\underline{k}$ denote the $m$-tuple $(k_1, \ldots, k_m)$, and write the corresponding $q$-multinomial coefficient as
$$ \binom{n}{k_1, \ldots, k_m}_q = \binom{n}{\ulk}_q.$$
For a subset $J \subseteq \{ 1, \ldots, m \}$, let $\ule_J$ denote the $m$-tuple $(e_1, \ldots, e_m)$, where 
$$ e_i = \left\{ \begin{array}{ll} 1 & \text{ if $i \in J$,} \\ 0 & \text{ if $i \not\in J$.} \end{array} \right.$$
For example, if $m = 3$, $J = \{1, 3\}$, and $\ulk = (k_1, k_2, k_3),$ then
$$\binom{n}{\ulk - \ule_J}_q = \binom{n}{k_1 -1, k_2, k_3 - 1}_q.$$
The following is our generalization of Lemma \ref{GalLemma}.

\begin{lemma} \label{GenGalLemma}
For $m \geq 2$, and any $k_1, \ldots, k_m > 0$ such that $k_1 + \cdots + k_m = n+1$, we have
$$ \binom{n+1}{k_1, \ldots, k_m}_q = \sum_{J \subseteq \{1, \ldots, m\}, |J| > 0} (-1)^{|J| - 1} \frac{(q)_{n}}{(q)_{n - |J| + 1}} \binom{n + 1 - |J|}{\ulk - \ule_J}_q$$
\end{lemma}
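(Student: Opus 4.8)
The plan is to prove the identity combinatorially, reading both sides as counts of flags in $\FF_q$-vector spaces and generalizing the trichotomy underlying the proof of Lemma \ref{GalLemma} in \cite{NiSoWi84}. The first thing I would record is that, although written as an alternating sum, the right-hand side is in fact a sum of \emph{positive} terms: exactly as in the remark following Theorem \ref{main} (with $i = |J|-1$), one has $(-1)^{|J|-1}\frac{(q)_n}{(q)_{n-|J|+1}} = (q^n - 1)(q^{n-1}-1)\cdots(q^{n-|J|+2}-1)$, a product of $|J|-1$ factors, each counting the nonzero vectors of a space of the indicated dimension. Writing $V$ for an $(n+1)$-dimensional space, the left-hand side $\binom{n+1}{\ulk}_q$ counts flags $V = U_0 \supsetneq U_1 \supsetneq \cdots \supsetneq U_m = 0$ with $\dim(U_{i-1}/U_i) = k_i$ (every step strict, since each $k_i > 0$), while in the $J$-th term $\binom{n+1-|J|}{\ulk - \ule_J}_q$ counts flags of the same shape but in dimension $n+1-|J|$, with each quotient dimension $k_i$, $i \in J$, lowered by one. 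So the goal is to partition the flags of $V$ into classes indexed by the nonempty subsets $J$, where class $J$ has size $(q^n-1)\cdots(q^{n-|J|+2}-1)\,\binom{n+1-|J|}{\ulk - \ule_J}_q$.

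To build such a partition I would fix a hyperplane $H \subset V$ of dimension $n$ and a vector $v_0 \in V \setminus H$, so that $V = H \oplus \langle v_0 \rangle$. For $m = 2$ a flag is a single subspace $W$, and the three classes $J = \{1\}, \{2\}, \{1,2\}$ are respectively $\{W \subseteq H\}$, $\{W \not\subseteq H,\; v_0 \in W\}$, and $\{W \not\subseteq H,\; v_0 \notin W\}$, whose sizes are readily checked to be $\binom{n}{k_2}_q$, $\binom{n}{k_2-1}_q$, and $(q^n-1)\binom{n-1}{k_2-1}_q$; this is the content of Lemma \ref{GalLemma}. For general $m$ I would let $J$ record the set of quotients $U_{i-1}/U_i$ that absorb the $v_0$-direction as one strips it from the flag, and set up a bijection from class $J$ onto pairs consisting of a flag of profile $\ulk - \ule_J$ in dimension $n+1-|J|$ together with insertion data — a strictly descending chain of $|J|-1$ nonzero vectors living in successively smaller quotients — which is exactly what $(q^n-1)\cdots(q^{n-|J|+2}-1)$ enumerates. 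One route to make this rigorous is induction on $m$ using the factorization $\binom{n+1}{\ulk}_q = \binom{n+1}{k_1}_q \binom{n+1-k_1}{k_2, \ldots, k_m}_q$, applying the case $m=2$ to the first factor and the inductive hypothesis to the second, and then collecting terms according to the union of the resulting index sets.

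I expect the main obstacle to be making the invariant $J$ and the insertion count precise for $m \geq 3$, where the $v_0$-direction can interact with several quotients at once and the descending-chain bijection (equivalently, the term-recombination in the induction) must be checked to be well defined and size-preserving. To guide that bookkeeping, and to have an independent verification that the class sizes are correct, I would first dispatch the scalar content of the identity algebraically: cancelling the common factor $\frac{(q)_n}{\prod_i (q)_{k_i}}$ from every term, using $(q)_{k_i} = (1-q^{k_i})(q)_{k_i - 1}$ and $(q)_{n+1} = (1-q^{n+1})(q)_n$, reduces the claim to
$$ q^{n+1} - 1 \;=\; \sum_{\emptyset \neq J \subseteq \{1, \ldots, m\}} \prod_{i \in J} (q^{k_i} - 1), $$
which is simply the expansion of $q^{n+1} = \prod_{i} q^{k_i} = \prod_i \bigl(1 + (q^{k_i} - 1)\bigr)$ with the empty term removed. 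This pins down exactly how many flags each class must contain, and thereby tells the combinatorial argument precisely what to prove.
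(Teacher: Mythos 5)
Your proposal is correct, but the part that actually proves the lemma is your final algebraic paragraph, and that route is genuinely different from the paper's. The paper's proof is precisely the induction you mention only as ``one route to make this rigorous'': factor $\binom{n+1}{\ulk}_q=\binom{n+1}{n+1-k_1}_q\binom{n+1-k_1}{k_2,\ldots,k_m}_q$, expand the first factor by Lemma \ref{GalLemma}, apply the inductive hypothesis to the second factor, and recombine, with each nonempty $I\subseteq\{1,\ldots,m-1\}$ contributing the term $J=I+1$ (from $\binom{n}{n-k_1}_q$) and the term $J=\{1\}\cup(I+1)$ (from $(q^n-1)\binom{n-1}{n-k_1}_q$); the substance there is the pair of coefficient identities making this recombination exact, which you do not carry out, and your partition-by-$J$ picture (left vague for $m\geq 3$, as you admit) is essentially the paper's closing Type 1/2/3 discussion, which the paper itself justifies via the induction rather than directly. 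What is new in your write-up is the reduction: dividing every term by $(q)_n/\prod_i(q)_{k_i}$ and using $(q)_{k_i}=(1-q^{k_i})(q)_{k_i-1}$ and $(q)_{n+1}=(1-q^{n+1})(q)_n$ collapses the identity to $q^{n+1}-1=\sum_{\emptyset\neq J\subseteq\{1,\ldots,m\}}\prod_{i\in J}(q^{k_i}-1)$, which is immediate from $q^{n+1}=\prod_{i=1}^m\bigl(1+(q^{k_i}-1)\bigr)$. I checked the cancellation and the signs; this is a complete, self-contained proof of the stated identity, shorter and more elementary than the paper's induction. The trade-off is that it is purely algebraic: the stated purpose of Section \ref{flags} is a proof in terms of finite vector spaces (the recursion in Corollary \ref{FlagRecCor} already follows from Theorem \ref{main}, so only a vector-space argument adds anything), and the paper's induction preserves the flag interpretation at every step, which is what underwrites the bijective reading given at the end of the section. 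So as a proof of the lemma your last paragraph suffices; as a realization of the combinatorial program, your sketch still needs exactly the recombination details the paper supplies.
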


Before proving Lemma \ref{GenGalLemma}, we explain why it implies Corollary \ref{FlagRecCor}.  First note that we may get a version of Lemma \ref{GenGalLemma} which allows any of the $k_i = 0$ as follows.  If we want $l$ of the $k_i$'s to be $0$, we start with applying Lemma \ref{GenGalLemma} to a $q$-multinomial coefficient of length $m-l$, using the $m-l$ nonzero $k_i$'s, and note that the equation in Lemma \ref{GenGalLemma} is not affected by inserting $0$'s into the appropriate positions of all the $q$-multinomial coefficients in both sides.  That is, if some $k_i=0$, we may still apply Lemma \ref{GenGalLemma}, while ignoring these $k_i$, or equivalently, we may apply Lemma \ref{GenGalLemma} to the $q$-multinomial coefficient obtained by removing the $k_i$'s which are $0$, and re-inserting these $0$'s in all $q$-multinomial coefficients in the sum the end.

Now consider the sum of all $q$-multinomial coefficients of the form $\binom{n+1}{k_1, \ldots, k_m}_q$, while applying the more general version of Lemma \ref{GenGalLemma} just discussed.  For any $i$, $0 \leq i \leq m-1$, each term $\binom{n+1}{k_1, \ldots, k_m}_q$ in $G_{n+1}^{(m)}$ may be obtained by adding $1$ to $i+1$ of the $l_i$'s in terms in $G_{n-i}^{(m)}$ of the form $\binom{n-i}{l_1, \ldots, l_m}_q$ in exactly $\binom{m}{i+1}$ ways.  By Lemma \ref{GenGalLemma}, these terms contribute exactly what we need to conclude Corollary \ref{FlagRecCor}.  By a similar argument, we may see that in fact the recursion for the multinomial Rogers-Szeg\"o polynomials in Theorem \ref{main} also follows from Lemma \ref{GenGalLemma}.

\begin{proof}[Proof of Lemma \ref{GenGalLemma}]  We will prove this by induction on $m$, where the base case $m=2$ is given by Lemma \ref{GalLemma}.  Fix $V$ to be an $(n+1)$-dimensional vector space over $\FF_q$.  We know $\binom{n+1}{k_1, \ldots, k_m}_q$ is the number of flags of subspaces of $V$, $W_{m-1} \subset \cdots \subset W_2 \subset W_1$, where ${\rm dim} \; W_i = n + 1 - \sum_{j=1}^i k_j$.  We must show that the right-hand side of the claimed identity in Lemma \ref{GenGalLemma} also counts these flags.  We have, by choosing first the subspace $W_1$ and then the rest of the flag, and applying Lemma \ref{GalLemma},
\begin{align} \label{firststep}
\binom{n+1}{k_1, \ldots, k_m}_q & = \binom{n+1}{n+1 - k_1}_q \binom{n+1 - k_1}{k_2, \ldots, k_m}_q \notag\\
& = \left( \binom{n}{n+1 - k_1}_q + \binom{n}{n-k_1}_q + (q^n - 1) \binom{n-1}{n - k_1}_q \right) \binom{n+1 - k_1}{k_2, \ldots, k_m}_q.
\end{align}
We have $\binom{n}{n+1-k_1}_q \binom{n+1 - k_1}{k_2, \ldots, k_m}_q = \binom{n}{k_1 - 1, k_2, \ldots, k_m}_q$, which is the term corresponding to the subset $J = \{1\} \subset \{1, \ldots, m\}$.  This may be thought of as the total number of ways of choosing our flag so that $W_1$ is contained in some fixed $n$-dimensional subspace of $V$.  By our induction hypothesis, the number of remaining flags is given by
\begin{equation} \label{remainingflags}
\left(\binom{n}{n-k_1}_q + (q^n - 1) \binom{n-1}{n - k_1}_q \right)\sum_{I \subseteq \{1, \ldots, m-1\}, |I| > 0} (-1)^{|I| - 1} \frac{(q)_{n-k_1}}{(q)_{n -k_1 - |I| + 1}} \binom{n + 1 - k_1 - |I|}{\ulk' - \ule_{I}}_q,
\end{equation}
where $\ulk' = (k_2, \ldots, k_m)$.  We have
$$\binom{n}{n-k_1}_q \frac{(q)_{n-k_1}}{(q)_{n - k_1 - |I| + 1}} = \frac{(q)_n}{(q)_{n - |I| + 1}} \binom{n+1 - |I|}{n - k_1 - |I| + 1}_q,$$
and
$$(q^n - 1) \binom{n-1}{n - k_1}_q\frac{(q)_{n-k_1}}{(q)_{n -k_1 - |I| + 1}} = (-1) \frac{(q)_n}{(q)_{n - |I|}} \binom{n-|I|}{n - k_1 - |I| + 1}_q.$$
Given $I \subset \{1, \ldots, m-1\}$, $|I| > 0$, write $I + 1 = \{i+1  \mid  i \in I\} \subseteq \{2, \ldots, m\}$.  If we let $\ulk = (k_1, \ldots, k_m)$, we now have
\begin{equation} \label{Term1}
\binom{n}{n-k_1}_q(-1)^{|I| - 1} \frac{(q)_{n-k_1}}{(q)_{n -k_1 - |I| + 1}} \binom{n + 1 - k_1 - |I|}{\ulk' - \ule_{I}}_q = (-1)^{|J| - 1} \frac{(q)_{n}}{(q)_{n - |J| + 1}} \binom{n + 1 - |J|}{\ulk - \ule_J}_q,
\end{equation}
where $J = I + 1$, $|J| = |I|$, and 
\begin{align} \label{Term2}
(q^n - 1) \binom{n-1}{n - k_1}_q (-1)^{|I| - 1} \frac{(q)_{n-k_1}}{(q)_{n -k_1 - |I| + 1}} & \binom{n + 1 - k_1 - |I|}{\ulk' - \ule_{I}}_q \notag \\ 
& = (-1)^{|J| - 1} \frac{(q)_{n}}{(q)_{n - |J| + 1}} \binom{n + 1 - |J|}{\ulk - \ule_J}_q,
\end{align}
where $J = \{1\} \cup (I + 1)$, $|J| = |I| + 1$.  As $I$ ranges over nonempty subsets of $\{1, \ldots, m-1\}$, $I + 1$ and $\{ 1 \} \cup (I+1)$ range over all nonempty subsets of $\{1, \ldots, m\}$ other than $\{ 1 \}$.  Finally, we substitute (\ref{Term1}) and (\ref{Term2}) into (\ref{remainingflags}), and we see that the sum of all of these terms, along with $\binom{n+1}{k_1 - 1, k_2, \ldots, k_m}$ corresponding to $J = \{ 1 \}$, gives the desired result.
\end{proof}

While the inductive proof of Lemma \ref{GenGalLemma} above works nicely, it somewhat disguises the way we count our flags to see the result bijectively.  We conclude with an explanation of this count.  First, we need to understand the combinatorial proof of Lemma \ref{GalLemma} appearing in \cite{NiSoWi84}, which may be summarized as follows.  Fix $V$ to be an $(n+1)$-dimensional $\FF_q$-vector space as before.  There are $\binom{n+1}{k}_q$ ways to choose a $k$-dimensional subspace $W$ of $V$.  Fix a basis $\{v_1, v_2, \ldots, v_{n+1}\}$ of $V$.  Any $k$-dimensional subspace $W$ can be written as $\mathrm{span}(W', v)$ where $W'$ is a $(k-1)$-dimensional subspace of $V' = \mathrm{span}(v_1, \ldots, v_n)$.  We may choose $W$ in three distinct ways.  If $v \in V'$, then $W$ is a subspace of $V'$, for which there are $\binom{n}{k}_q$ choices.  Call this a \emph{Type 1} subspace of $V$.  If we take $v$ to be a scalar multiple of $v_{n+1}$, then $W$ is determined by $W'$, for which there are $\binom{n}{k-1}_q$ choices.  We call this a \emph{Type 2} subspace of $V$.  Finally, if $v$ is neither in $V'$ nor a scalar multiple of $v_{n+1}$, then we call $W$ a \emph{Type 3} subspace of $V$, and it can be shown that there are $(q^n-1)\binom{n-1}{k-1}_q$ choices for $W$, giving Lemma \ref{GalLemma}.

We now fix a basis of \emph{every} subspace $U$ of $V$, so that we may speak of subspaces of Type 1, 2, or 3 of $U$.  Consider a flag of subspaces of $V = W_0$, $W_{m-1} \subset \cdots \subset W_2 \subset W_1$, such that if we define $k_i$ for $1 \leq i \leq m$ by $\sum_{j=1}^i k_j = n + 1 - {\rm dim} \; W_i$, then each $k_i > 0$.  The total number of such flags is $\binom{n+1}{k_1, \ldots, k_m}_q$, and these flags may also be counted in the following way.  We may choose $W_1$ to be a Type 1 subspace of $V$, or we may choose every $W_i$ to be a Type 2 or Type 3 subspace of $W_{i - 1}$ for $i \leq m-1$, or we may choose $W_i$ to be a Type 2 or Type 3 subspace of $W_{i-1}$ for $i \leq r-1$ for some $r < m$ and $W_r$ a Type 1 subspace of $W_{r-1}$.  These cases account for all possibilities for such a flag of $V$.  For a nonempty $J \subseteq \{ 1, \ldots, m \}$, let $r$ be the maximum element of $J$.  Then a closer look at the proof of Lemma \ref{GenGalLemma} reveals that
$$(-1)^{|J| - 1} \frac{(q)_{n}}{(q)_{n - |J| + 1}} \binom{n + 1 - |J|}{\ulk - \ule_J}_q$$
is the number of ways to choose our flag such that $W_j$ is a Type 3 subspace of $W_{j-1}$ for $j \in J$ and $j < r$, $W_i$ is a Type 2 subspace of $W_{i-1}$ for $i \not\in J$ and $i < r$, and $W_r$ is a Type 1 subspace of $W_{r-1}$ if $r < m$.  These account for all $2^m - 1$ terms in the right-side of the equation in Lemma \ref{GenGalLemma}, and all possible ways to choose our flag.
\\
\\
\noindent
{\bf Acknowledgments.  }  The author thanks Prof. George Andrews for helpful and encouraging comments.  This research was supported by NSF grant DMS-0854849.

\bigskip

\noindent
\begin{tabular}{ll}
\textsc{Department of Mathematics}\\ 
\textsc{College of William and Mary}\\
\textsc{P. O. Box 8795}\\
\textsc{Williamsburg, VA  23187}\\
{\em e-mail}:  {\tt vinroot@math.wm.edu}\\
\end{tabular}

\end{document}